\documentclass[12pt]{article}
\emergencystretch=2em \voffset=-2cm \hoffset=-1cm
\unitlength=0.6mm
\usepackage{amssymb, amsmath,epsfig,amsthm}
\textwidth=16cm \textheight=23.2cm
\begin{document}
 \righthyphenmin=2

\def \<{{\langle}}
\def\>{{\rangle}}
\def\Z{{\Bbb Z}}
\def\N{{\mathbb N}}
\def\NN{{\mu_1}}
\def\T{{\cal T}}
\def \D{{\cal D}}
\def \C{{\cal C}}
\def \B{{\cal B}}
\def \Di{{\cal D}}
\def \di{{\Bbb D}}
\def \og{{\Bbb O}}
\def \H{{\cal H}}
\def \I{{\cal I}}
\def \Tht{{\Theta}}
\def \F{{\cal F}}
\def \V{{\cal V}}
\def \a{{\alpha}}
\def \l{{\lambda}}
\def \t{{\beta}}
\def \bt{{\beta}}
\def \dt{{\delta}}
\def \De{{\Delta}}
\def \ga{{\gamma}}
\def \kp{{\varkappa}}
\def \sgn{{\operatorname{sgn}}}
\def \R{{\mathbb R}}
\def \CR{{\cal R}}
\def \A {{\cal A}}
\def \a{{\alpha}}
\def \m{{\mu}}
\def \M{{\cal M}}
\def \g{{\gamma}}
\def \bt{{\beta}}
\def \la{{\lambda}}
\def \La{{\Lambda}}
\def \fee{{\varphi}}
\def \e{{\epsilon}}
\def \s{{\sigma}}
\def \Sm{{\Sigma}}
\def \sm{{\sigma_m}}
\def \td{\tilde}
\newcommand{\wh}{\widehat}
\newcommand{\wt}{\widetilde}
\newcommand{\ol}{\overline}
\def \supp{{\operatorname{supp}}}
\def \sign{{\operatorname{sign}}}
\def \sp{{\operatorname{Span}}}
\def \csp{{\overline{\operatorname{span}}}}
\def \Dff{{\operatorname{Diff}}}
\def \Dst{{\operatorname{Dist}}}
\def\charf\#1{\chi_{\lower3pt\hbox{$\scriptstyle \#1$}}} 
\catcode`\@=11
\def\NoLogo{\let\logo@\empty}
\catcode`\@=\active \NoLogo

\newtheorem{Theor}{Theorem}
\newtheorem{Lemm}{Lemma}
\newtheorem{Corollar}{Collary}
\theoremstyle{remark}
\newtheorem{Remar}{Remark}

\begin{center}
{\Large On Greedy Algorithms for dictionary with bounded
cumulative coherence.}\footnote{This research is partially
supported by Russian Foundation for Basic Research
project 08-01-00799 and 09-01-12173} \\
 Eugene Livshitz
\end{center}
\begin{abstract}
We discuss the upper and lower estimates for the rate of
convergence of Pure and Orthogonal Greedy Algorithms for
dictionary with bounded cumulative coherence.
\end{abstract}

\paragraph*{Introduction.}
Let $H$ be a real, separable Hilbert space equipped with an inner
product $\<\cdot,\cdot\>$ and the norm
$\|\cdot\|=\<\cdot,\cdot\>^{1/2}$. We say that a set $\D$,
$\D\subset H$ is a dictionary if
\begin{equation*}
 g\in\D\ \Rightarrow \|g\|=1,\mbox{ and }
\csp{\D}=H.
\end{equation*}
Recently the following problem has been intensively studied in
Approximation Theory and Numeral Analysis: to construct by element
$f\in H$ and $m\in\N$ an $m$-term combination
\begin{equation*}
f\rightarrow\sum_{k=1}^m c_k(f)g_k(f),\ c_k(f)\in\R,\ g_k(f)\in\D
\end{equation*}
that provides a good approximation for $f$. Greedy Algorithms turn
out to be effective for obtaining such $m$-term approximations
(see tutorial~\cite{T} for details). Two most popular of them are
defined below.

{\sc Pure Greedy Algorithm (PGA)} {\it Set  $f^{PGA}_0:=f\in H$,
$G_0^{PGA}(f,\Di) := 0$. For each $m\ge 0$ we inductively find
$g^{PGA}_{m+1}\in \D$ such that
\begin{equation}\label{introd-PGA-choice}
|\<f^{PGA}_m,g^{PGA}_{m+1}\>|= \sup_{g\in\D}|\<f^{PGA}_m,g\>|
\end{equation}
and define}
\begin{equation*}
 G_{m+1}^{PGA}(f,\D):= G_{m}^{PGA}(f,\Di)+\<f^{PGA}_m,g^{PGA}_{m+1}\>g^{PGA}_{m+1},
\end{equation*}
\begin{equation*}
f^{PGA}_{m+1}:= f-G_{m+1}^{PGA}(f,\Di) =
f^{PGA}_m-\<f^{PGA}_m,g^{PGA}_{m+1}\>g^{PGA}_{m+1}.
\end{equation*}

{\sc Orthogonal Greedy Algorithm (OGA)} {\it Set  $f^{OGA}_0:=f\in
H$, $G_0^{OGA}(f,\Di) := 0$. For each $m\ge 0$ we inductively find
$g^{OGA}_{m+1}\in \D$ such that
\begin{equation}\label{introd-PGA-choice}
|\<f^{OGA}_m,g^{OGA}_{m+1}\>|= \sup_{g\in\D}|\<f^{OGA}_m,g\>|
\end{equation}
and define}
\begin{equation*}
 G_{m+1}^{OGA}(f,\D):= Proj_{g^{OGA}_{1},\ldots,g^{OGA}_{m+1}}(f),
\end{equation*}
\begin{equation*}
f^{OGA}_{m+1}:= f-G_{m+1}^{OGA}(f,\Di).
\end{equation*}

Thus for $f\in H$  and each  $m\ge 1$ we construct $m$-term
approximations $G_{m}^{PGA}(f,\Di)$ and $G_{m}^{OGA}(f,\Di)$.

In this article we study the rate of convergence of Greedy
Algorithms for class $\A_0(\D)$  that is a set of finite linear
combination of elements from $\D$ and classes $\A^p(\D)$, $1\le p<
2$, defined below. For $M\ge 0$ we define
\begin{equation*}
\A^p(\D,M):=\overline{\{\sum_{\la\in\La}c_\la g^\la:
\sum_{\la\in\La}|c_\la|^p\le M^p,\ c_\la\in\R,\ g^\la\in\D,\
\sharp\La<\infty\}},
\end{equation*}
(where closure is taken in the norm of $H$). Set
\begin{equation*}
\A^p(\D):=\bigcup_{M\ge 0} \A^p(\D,M),
\end{equation*}
\begin{equation*}
|f|_p:=|f|_{\A^p(\D)}:=\inf\{M\ge 0: f\in \A^p(\D,M)\},\
f\in\A^p(\D).
\end{equation*}

From results of  R.A.~DeVore, V.N.~Temlyakov and E.D.~Livshitz
\cite{DT}, \cite{LT}, \cite{L3} it follows that Orthogonal Greedy
Algorithm does provide the optimal rate of convergence
$C|f|_1m^{-1/2}$ in $\A^1(\D)$, but  Pure Greedy Algorithm
doesn't. For narrower classes such as $\A_0(\D)$ the rate of
convergence of OGA could not be better than $Cm^{-1/2}$ and would
not be optimal. In the same time if dictionary $\D$ satisfies some
additional properties the rate of convergence of Greedy Algorithms
 (for some classes) could be essentially better. This area is
 called {\it Sparse Approximation} and has been intensively
 studied last time (\cite{GMS}, \cite{GN}, \cite{Tr},
\cite{DET}). In this article results will be formulated using the
notion of {\it cumulative coherence} of the dictionary introduced
by J.~Tropp~\cite{Tr}
\begin{equation}\label{coher-NN}
\NN(\D):=\sup_{g\in\D}\sum_{\wt g\in\D,\ \wt g\ne g} |\<\wt g,
g\>|.
\end{equation}
Above-mentioned articles contain the following basic results of
Sparse Approximation Theory.

\noindent{\bf Theorem A.} {\it Let $\D$ be a dictionary with
$\NN(\D)<1/2$ and $f\in\A_0(\D)$. Then
\begin{equation*}
G_m^{OGA}(f,\D)=f,\quad m \ge m_0,
\end{equation*}
\begin{equation*}
\|f-G_m^{PGA}(f,\D)\|=\|f_m\|\le C\exp(-c(f)m),\ m\ge 0.
\end{equation*}
}

For dictionaries with small  $\NN(\D)$ PGA provides optimal rate
of convergence in $\A^p(\D)$, $1\le p< 2$.

\begin{Theor}\label{ThA1Good} Let $\D$ be a dictionary with
$\NN(\D)<1/3$ and $f\in\A_1(\D)$. Then
\begin{equation*}
\|f-G_m^{PGA}(f,\D)\|=\|f_m\|\le |f|_1m^{-1/2},\ m\ge 0.
\end{equation*}
\end{Theor}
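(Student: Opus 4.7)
The plan is to adapt the standard analysis of OGA on $\A^1(\D)$ to PGA, using the coherence hypothesis $\NN(\D)<1/3$ to produce an approximate form of the orthogonality that OGA enjoys. For OGA one combines the identity $\|f_m\|^2=\<f_m,f\>\le |f|_1\beta_m$ (where $\beta_m:=\sup_{g\in\D}|\<f_m,g\>|$) with the decrement $\|f_{m+1}\|^2\le\|f_m\|^2-\beta_m^2$ and the classical recursion lemma (if $a_{m+1}\le a_m-a_m^2/B^2$ and $a_0\le B^2$, then $a_m\le B^2/(m+1)$) to deduce the $m^{-1/2}$-rate. My goal is to reproduce the same two ingredients for PGA.

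Set $M:=|f|_1$ and write $f=\sum_{\l\in\La}c_\l g^\l$ with $\sum|c_\l|\le M$ (first for elements of $\A_0(\D)$ and then passing to the limit). Let $c_i:=\<f_{i-1},g_i\>$ denote the PGA coefficients. The identity $\<f_m,f\>=\|f_m\|^2+\<f_m,G_m^{PGA}\>$ combined with $|\<f_m,f\>|\le M\beta_m$ yields
\begin{equation*}
\|f_m\|^2\le M\beta_m+|\<f_m,G_m^{PGA}\>|,
\end{equation*}
so the OGA bound $\|f_m\|^2\le M\beta_m$ is perturbed only by the cross-term $\<f_m,G_m^{PGA}\>$, which vanishes for OGA but not for PGA. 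Note that the PGA decrement $\|f_{m+1}\|^2=\|f_m\|^2-\beta_m^2$ holds as an equality.

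To control the cross-term, use $f_j=f_{j-1}-c_jg_j$ and $\<f_j,g_j\>=0$, which give $\<f_m,g_i\>=-\sum_{j=i+1}^m c_j\<g_j,g_i\>$ and therefore
\begin{equation*}
\<f_m,G_m^{PGA}\>=-\sum_{i<j\le m}c_ic_j\<g_i,g_j\>.
\end{equation*}
A direct Gershgorin-style application of $\NN(\D)<1/3$ only produces $|\<f_m,G_m^{PGA}\>|\le \tfrac{\NN}{2}\sum_i c_i^2=\tfrac{\NN}{2}(\|f\|^2-\|f_m\|^2)$, which is not of the multiplicative form needed. The main obstacle in the proof will be to refine this estimate — presumably through an induction on $m$ that simultaneously controls $\|f_m\|$ together with an auxiliary quantity such as the $\A^1(\D)$-norm of $f_m$ or $\sum_{i\le m}|c_i|$ — to a bound of the form $|\<f_m,G_m^{PGA}\>|\le \eta\|f_m\|^2$ (or $\eta M\beta_m$) with $\eta<1$. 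The specific threshold $\NN<1/3$ should emerge as the exact hypothesis under which this induction closes.

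Granting such an inequality $(1-\eta)\|f_m\|^2\le M\beta_m$, the PGA decrement and $\|f_0\|=\|f\|\le M$ combine with the recursion lemma to produce $\|f_m\|\le M((1-\eta)^2 m)^{-1/2}$. Matching exactly the constant $1$ claimed in the statement requires the finer analysis of the previous step to yield an effective $\eta=0$ (e.g.\ on average, or by absorbing the loss into a short initial segment where the bound $\|f_m\|\le\|f\|\le M$ is trivially sharper than $M m^{-1/2}$); the ensuing but routine constant tracking then completes the proof.
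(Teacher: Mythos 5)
Your plan correctly identifies the obstruction --- for PGA the identity $\|f_m\|^2=\<f_m,f\>-\<f_m,G_m^{PGA}(f,\D)\>$ carries a cross-term that vanishes only for OGA --- but it does not resolve it, and the resolution is the entire content of the theorem. Your Gershgorin-type bound gives $|\<f_m,G_m^{PGA}(f,\D)\>|\le \NN(\D)\sum_i c_i^2=\NN(\D)(\|f\|^2-\|f_m\|^2)$, an additive quantity that does not tend to zero, while the inequality you need, $|\<f_m,G_m^{PGA}(f,\D)\>|\le\eta\|f_m\|^2$ with effectively $\eta=0$, must scale with $\|f_m\|^2\to 0$. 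You propose to close this by ``an induction on $m$ that simultaneously controls $\|f_m\|$ together with an auxiliary quantity such as the $\A^1(\D)$-norm of $f_m$,'' but you give no mechanism for why such an induction closes, nor why $1/3$ is the right threshold; the phrase ``granting such an inequality'' marks a genuine gap, not a routine verification. (There is also a secondary issue: the selected elements $g_1,\dots,g_m$ need not be distinct, so $\sum_{j\ne i}|\<g_i,g_j\>|$ is not automatically bounded by $\NN(\D)$.)

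The missing idea, which is the paper's Lemma~\ref{Lm-A1-2}, is to abandon the cross-term entirely and instead track a fixed near-minimal representation $f=f_\e+\sum_{\la\in\La}c_\la g^\la$ with $\sum|c_\la|=|f|_1$. Under $\NN(\D)<1/3$ one shows via Lemma~\ref{Lm-A1-1} that the greedy step necessarily selects an element $g^{\la_0}$ with $\la_0\in\La$, that $\<f_{m-1},g^{\la_0}\>$ has the same sign as $c_{\la_0}$, and that the updated coefficient satisfies $|c_{\la_0}-\<f_{m-1},g^{\la_0}\>|+\frac12(1-3\NN(\D))\max_\la|c_\la|\le|c_{\la_0}|$; hence the residual $f_m$ inherits a representation on the \emph{same} support with strictly smaller $\ell^1$-norm, so $|f_m|_1\le|f|_1$ for all $m$. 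This is where the hypothesis $\NN(\D)<1/3$ enters, and it is exactly the invariant your sketch hopes exists but does not construct. Once $|f_m|_1\le|f|_1$ is known, the standard duality bound $\sup_g|\<f_m,g\>|\ge\|f_m\|^2/|f_m|_1\ge\|f_m\|^2/|f|_1$ (Lemma~3.5 of~\cite{DT}) and the recursion lemma you quote give $\|f_m\|\le|f|_1(m+1)^{-1/2}$ with the clean constant $1$, with no $\eta$-loss to absorb.
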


\begin{Theor}\label{ThA1GoodMod} Suppose $\D$ is a dictionary with
$\NN(\D)<1/3$, $p$, $1\le p<2$ and $f\in\A^p(\D)$. Then there
exist $C_1=C_1(p)>0$ and $C_2=C_2(\NN(\D))>0$ such that for any
$m\ge 1$
\begin{equation*}
\|f-G_m^{PGA}(f,\D)\|=\|f_m\|\le C_1C_2 |f|_p m^{-1/p+1/2}.
\end{equation*}
\end{Theor}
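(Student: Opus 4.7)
The plan is to derive Theorem~\ref{ThA1GoodMod} from Theorem~\ref{ThA1Good} via a head-plus-tail decomposition of elements of $\A^p(\D)$, using the bounded cumulative coherence to control the tail in the $H$-norm. Write $M := |f|_p$ and choose a representation $f = \sum_\lambda a_\lambda g^\lambda$ with $\sum_\lambda |a_\lambda|^p \le M^p$. Order the coefficients in decreasing modulus $|a_1^*| \ge |a_2^*| \ge \cdots$, so that $|a_k^*| \le M k^{-1/p}$. For a threshold parameter $N$ (to be chosen later), set $h_N := \sum_{k\le N} a_k^* g^{\lambda_k}$ and $r_N := f - h_N$. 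Elementary estimates then give $|h_N|_1 \le C(p) M N^{1-1/p}$ (with a logarithmic correction at $p=1$ only). The condition $\NN(\D)<1/3$ enters through the Bessel-type inequality
\[
\Big\|\sum_\mu c_\mu g^\mu\Big\|^2 \le \bigl(1+\NN(\D)\bigr)\sum_\mu c_\mu^2 \qquad (g^\mu\in\D,\text{ pairwise distinct}),
\]
which follows from (\ref{coher-NN}) after applying $|c_\mu c_\nu| \le \tfrac12(c_\mu^2+c_\nu^2)$ to each off-diagonal term; applied to $r_N$, this yields $\|r_N\| \le C(p,\NN(\D)) M N^{1/2-1/p}$, a bound that makes essential use of $p<2$.

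Next I would exploit the Markov property of PGA: the residual $f^{PGA}_{m+k}$ obtained by $m+k$ iterations on $f$ coincides with the $k$-th residual of PGA restarted from $f^{PGA}_m$. Thus it is enough to locate an intermediate step $m_0 = m_0(m)$ at which $|f^{PGA}_{m_0}|_1 \le A$ for some $A \asymp M N^{1-1/p}$, and then apply Theorem~\ref{ThA1Good} to $f^{PGA}_{m_0}$ to obtain $\|f^{PGA}_{m_0+k}\| \le A k^{-1/2}$. Balancing $N$ against $m$ (taking $N \sim m_0 \sim m$) then converts this into the target bound $\|f^{PGA}_m\| \le C_1 C_2 M m^{-1/p+1/2}$.

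The principal difficulty lies in producing the intermediate $\A^1$-bound on $f^{PGA}_{m_0}$. Writing $f^{PGA}_{m_0} = \bigl(h_N - G^{PGA}_{m_0}(f,\D)\bigr) + r_N$, one must control the cumulative coefficient sum $|G^{PGA}_{m_0}(f,\D)|_1 \le \sum_{k\le m_0}|\<f^{PGA}_{k-1},g^{PGA}_k\>|$, and also handle the fact that $r_N$ is only bounded in the $H$-norm, not in $\A^1$. The strict hypothesis $\NN(\D)<1/3$ (compared to the $<1/2$ of Theorem~A) is precisely what ensures, following the proof of Theorem~\ref{ThA1Good}, that the PGA coefficients $|\<f^{PGA}_k,g^{PGA}_{k+1}\>|$ contract geometrically against the residual norms, so that their sum is dominated by $C(\NN(\D))\bigl(|h_N|_1 + \|r_N\|\bigr)$. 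This contraction produces the coherence-dependent factor $C_2(\NN(\D))$ in the theorem, while $C_1(p)$ records the purely $p$-dependent constants coming from the decomposition step.
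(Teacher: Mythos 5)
Your overall strategy is genuinely different from the paper's, and it contains a gap at exactly the point you flag as ``the principal difficulty'': the intermediate bound $|f^{PGA}_{m_0}|_1\le A$ is never actually obtained, and the mechanism you sketch for it does not work. Theorem~\ref{ThA1Good} applies only to elements of $\A^1(\D)$, whereas your $f^{PGA}_{m_0}=\bigl(h_N-G^{PGA}_{m_0}(f,\D)\bigr)+r_N$ lies only in $\A^1(\D)$ plus an $H$-ball: the tail $r_N$ is controlled in the $H$-norm by $CMN^{1/2-1/p}$, but its $\A^1$-norm $\sum_{k>N}|a_k^*|$ is not controlled by $M$ and $N$ at all, so no restart of Theorem~\ref{ThA1Good} is available. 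Nor can $r_N$ be absorbed as a small perturbation in the coefficient-tracking argument (the paper's Lemma~\ref{Lm-A1-2}): that argument requires the perturbation to satisfy $\e<\frac16(1-3\NN(\D))\max_{\la\in\La}|c_\la|$ at \emph{every} step, i.e.\ the error must stay below the largest surviving head coefficient; but $\|r_N\|\sim MN^{1/2-1/p}$ exceeds the coefficient scale $MN^{-1/p}$ by a factor $N^{1/2}$ (for $p$ near $2$ it is of order $M$ itself), so the condition fails long before the head is exhausted, and once the residual norm becomes comparable to $\|r_N\|$ the greedy selection can be driven by the tail rather than the head. The claimed bound $\sum_{k\le m_0}|\<f^{PGA}_{k-1},g^{PGA}_k\>|\le C(\NN(\D))\bigl(|h_N|_1+\|r_N\|\bigr)$ is asserted, not proved, and I do not see how to prove it; the only unconditional control is $\sum_{k\le m_0}|\<f^{PGA}_{k-1},g^{PGA}_k\>|\le m_0^{1/2}\|f\|$, which is useless here.

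For contrast, the paper never restarts and never passes through $\A^1$. It keeps a single representation $f_n=f_\e+\sum_{\la\in\La}c_{\la,n}g^\la$ with $\La$ finite and $\|f_\e\|<\e$ arbitrarily small, shows via Lemma~\ref{Lm-A1-2} that each PGA step decreases $\sum_{\la}|c_{\la,n}|^p$ by at least $2^{-p}(1-3\NN(\D))^p\max_{\la}|c_{\la,n}|^p$, deduces that within $k$ steps some residual satisfies $\max_{\la}|c_{\la,n}|\le Ck^{-1/p}|f|_p$ while $\sum_{\la}|c_{\la,n}|^p\le|f|_p^p$, and then concludes by the interpolation $\sum_{\la}c_{\la,n}^2\le(\max_{\la}|c_{\la,n}|)^{2-p}\sum_{\la}|c_{\la,n}|^p$ together with the upper bound of Lemma~\ref{Lm-A1-0} and the monotonicity of $\|f_m\|$. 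Your head--tail split and your Bessel-type inequality (which, with constant $1+\NN(\D)$, is in fact slightly sharper than Lemma~\ref{Lm-A1-0}) are sound as far as they go, but the reduction to Theorem~\ref{ThA1Good} is the step that cannot be completed as described. (A minor point: for $p=1$ no logarithmic correction is needed, since $|h_N|_1\le\sum_{k}|a_k^*|\le M$ directly.)
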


In the same time for big (but finite) values of
 $\NN(\D)$ Pure Greedy Algorithms can not always provide exponential rate of
 convergence,
 moreover one could be worse than $Cm^{-1/2}$:

\begin{Theor} \label{ThColow} There exists a dictionary
$\D$ with $\NN(\D)<\infty$,  $f_0\in\A_0(\D)$, $\bt >0$ and $C>0$
and that such for any $m \ge 1$ we have
\begin{equation*}
\|f_0-G_m^{PGA}(f_0,\D)\|=\|f_m\|\ge C m^{-1/2+\bt}.
\end{equation*}
\end{Theor}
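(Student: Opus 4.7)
My plan is to adapt the known PGA lower bound constructions of DeVore--Temlyakov~\cite{DT} and Livshitz--Temlyakov~\cite{LT} (see also \cite{L3}). Those papers produce a dictionary $\td\D$ and an element $\td f\in \A_0(\td\D)$ for which $\|\td f-G^{PGA}_m(\td f,\td\D)\|\ge Cm^{-1/2+\bt}$. In those constructions nothing prevents $\NN(\td\D)$ from being infinite, so the task is to modify $\td\D$ to make the cumulative coherence finite while preserving the lower bound.

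I would first recall the layered structure of the LT example. The dictionary decomposes as $\td\D=\bigcup_{s\ge 1}\td\D_s$, with $\supp\td f\subset\td\D_1$. At iteration $m$ the PGA is forced to choose a prescribed element $g^{PGA}_m\in\td\D_{s(m)}$ because the inner products $\<f^{PGA}_{m-1},g\>$, $g\in\td\D$, take a short list of values whose ordering is insensitive to a common rescaling inside each pair of layers. The layer index $s(m)$ grows with $m$ and $\|f^{PGA}_m\|$ decays along a controlled schedule, yielding the $m^{-1/2+\bt}$ behaviour.

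The next step is to force $\NN(\D)<\infty$. I would enlarge the ambient Hilbert space and replace each $g\in\td\D_s$ by $g':=\a_s g+\bt_s h_g$, where $\{h_g\}_{g\in\td\D}$ is a fresh orthonormal system in a subspace orthogonal to $\csp\,\td\D$ and $\a_s^2+\bt_s^2=1$. This keeps $\|g'\|=1$, multiplies every cross-inner product $\<g_1,g_2\>$ with $g_1\in\td\D_s$, $g_2\in\td\D_t$, $g_1\ne g_2$, by $\a_s\a_t$, and the new additions $h_g$ are orthogonal to everything else. Choosing $\a_s\downarrow 0$ fast enough relative to $|\td\D_s|$ makes every row sum in~(\ref{coher-NN}) the tail of a convergent series and hence $\NN(\D)<\infty$.

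It remains to check that the PGA trajectory on $f_0:=\sum c_\la g'_\la$ still cycles through the prescribed sequence of elements and produces a residual of the required size. Because on each pair of layers $(s,t)$ the inner products are scaled by the common factor $\a_s\a_t$, the ordering that drives the original PGA selection rule is preserved provided the parameters are tuned so that $\a_s\a_{s(m)}|\<f^{PGA}_{m-1},g^{PGA}_m\>|$ remains dominant at every step; the residuals then coincide with the original ones up to absolute constants and deliver $\|f_0-G^{PGA}_m(f_0,\D)\|\ge C m^{-1/2+\bt'}$ with a possibly smaller $\bt'>0$. The hard part is precisely this last coordination: too aggressive a rescaling could either invert the comparisons between inner products (so PGA would make different, possibly faster, choices) or collapse the residual itself, so the sequences $\a_s$ and $|\td\D_s|$ must be tuned jointly with the parameters of the underlying LT construction.
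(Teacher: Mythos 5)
A preliminary but important observation: the paper as written contains no proof of Theorem~\ref{ThColow} at all --- the theorem is stated in the introduction and never returned to (only Theorems~\ref{ThA1Good} and~\ref{ThA1GoodMod} are proved) --- so there is no argument of the author's to measure yours against, and your proposal has to be judged on its own terms.

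On those terms, what you have is a plan rather than a proof, and the gap sits exactly where you place it yourself (``the hard part is precisely this last coordination''). Two concrete problems. (i) The observation that replacing $g$ by $g'=\a_s g+\bt_s h_g$ multiplies every cross-inner product by $\a_s\a_t$ is true for pairs of dictionary elements, but the quantity driving the PGA is $\<f^{PGA}_{m-1},g'\>$, and $f^{PGA}_{m-1}$ is not obtained from the original residual by the same substitution: already at the first step $\<f_0,g'_\la\>=\a_1^2\<\td f,g_\la\>+\bt_1^2 c_\la$ for $\la$ in the support of $\td f$, which is not a common rescaling of $\<\td f,g_\la\>$, and the discrepancy propagates, including through the $h$-components that accumulate in the residual and re-enter whenever an element is selected more than once. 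So the assertion that ``the ordering that drives the original PGA selection rule is preserved'' does not follow from the scaling remark; it must be re-established for the perturbed dynamics. (ii) Even granting the trajectory, the conclusion $\|f_m\|\ge Cm^{-1/2+\bt'}$ needs a quantitative lower bound on the perturbed residual; ``coincide with the original ones up to absolute constants'' is asserted, not shown, and since $\a_s\downarrow 0$ one must check that the damping which buys $\NN(\D)<\infty$ does not simultaneously destroy the lower bound on $\|f_m\|$. Until explicit sequences $\a_s$, $\bt_s$, $|\td\D_s|$ are exhibited and these two verifications are carried out, the theorem is not proved. The strategy itself --- damping the layers of the Livshitz--Temlyakov example to force finite cumulative coherence --- is sensible and may well be what the author intends, but as it stands your proposal records the difficulty rather than resolving it.
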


\paragraph*{Properties of dictionaries with bounded cumulative coherence.}

It's easy to see that any dictionary with bounded cumulative
coherence in separable Hilbert space is countable. Suppose that
elements of dictionary are enumerated: $\D=\{g^\la\}_{\la \in\N}$.

\begin{Lemm}\label{Lm-A1-0}
Let $\D$ be a dictionary with $\NN(\D)<1/2$, $N>0$, $c_\nu\in\R$,
$g^\nu\in\D$, $1\le\nu\le\N$. Then the following inequalities
\begin{equation*}
(1-2\NN(\D))\sum_{\nu=1}^N c_\nu^2\le \left\|\sum_{\nu=1}^N c_\nu
g^\nu\right\|^2\le (1+2\NN(\D))\sum_{\nu=1}^N c_\nu^2
\end{equation*}
hold.
\end{Lemm}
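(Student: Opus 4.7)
The plan is to expand the squared norm via bilinearity and control the off-diagonal terms using the cumulative coherence bound. Specifically, I would begin by writing
\begin{equation*}
\left\|\sum_{\nu=1}^N c_\nu g^\nu\right\|^2 = \sum_{\nu=1}^N c_\nu^2 \|g^\nu\|^2 + \sum_{\nu\ne\mu} c_\nu c_\mu \langle g^\nu,g^\mu\rangle,
\end{equation*}
and since each $\|g^\nu\|=1$, the diagonal piece is exactly $\sum_\nu c_\nu^2$. The lemma then reduces to showing that the absolute value of the off-diagonal sum is bounded by $2\NN(\D)\sum_\nu c_\nu^2$, which will yield the two-sided estimate simultaneously.

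To estimate the off-diagonal contribution, I would apply the inequality $2|c_\nu c_\mu|\le c_\nu^2 + c_\mu^2$ term-by-term, giving
\begin{equation*}
\left|\sum_{\nu\ne\mu} c_\nu c_\mu \langle g^\nu,g^\mu\rangle\right| \le \frac{1}{2}\sum_{\nu\ne\mu}(c_\nu^2+c_\mu^2)|\langle g^\nu,g^\mu\rangle|.
\end{equation*}
By symmetry of the summand under interchange of $\nu$ and $\mu$, both halves of this sum are equal, so I can combine them and rewrite the result as $\sum_\nu c_\nu^2 \sum_{\mu\ne\nu}|\langle g^\nu,g^\mu\rangle|$. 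The inner sum is bounded by $\NN(\D)$ by the definition in (\ref{coher-NN}), since it is a partial sum of the one appearing there. This yields an off-diagonal bound of $\NN(\D)\sum_\nu c_\nu^2$, which is even stronger than the required $2\NN(\D)\sum_\nu c_\nu^2$.

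There is no real obstacle here; the proof is essentially a direct computation, and the only judgement calls are (i) choosing the AM--GM-type inequality on $|c_\nu c_\mu|$ to decouple the quadratic coefficients from the inner products, and (ii) noticing that restricting the sum $\sum_{\mu\ne\nu}|\langle g^\nu,g^\mu\rangle|$ to the finitely many indices in our combination only decreases it relative to the defining supremum for $\NN(\D)$. Combining the diagonal identity with the off-diagonal estimate gives both inequalities at once.
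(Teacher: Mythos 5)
Your proof is correct, and it follows the same overall strategy as the paper (expand the squared norm, keep the diagonal, and bound the off-diagonal mass by the cumulative coherence), but the mechanism for decoupling the coefficient products is genuinely different. The paper first reorders so that $|c_1|\ge\cdots\ge|c_N|$, writes the cross terms as $2c_\nu\sum_{\eta>\nu}c_\eta\langle g^\nu,g^\eta\rangle$, and uses the monotonicity $|c_\eta|\le|c_\nu|$ to pull out $c_\nu^2$; this costs a factor $2$ and yields the stated bound $2\NN(\D)\sum_\nu c_\nu^2$. You instead keep the full symmetric double sum, apply $2|c_\nu c_\mu|\le c_\nu^2+c_\mu^2$, and symmetrize, which needs no reordering and gives the sharper off-diagonal bound $\NN(\D)\sum_\nu c_\nu^2$, i.e. the two-sided estimate with constants $1\pm\NN(\D)$ rather than $1\pm 2\NN(\D)$ (this is in effect the Gershgorin/Schur-test bound on the Gram matrix and is the standard sharper form in the sparse-approximation literature). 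The only point worth making explicit in your write-up is that the step $\sum_{\mu\ne\nu}|\langle g^\nu,g^\mu\rangle|\le\NN(\D)$ requires the elements $g^1,\dots,g^N$ to be pairwise distinct, so that this sum really is a sub-sum of the one defining $\NN(\D)$ in (\ref{coher-NN}); this hypothesis is implicit in the lemma (and equally implicit in the paper's own proof), and the statement is false without it.
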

\begin{proof}
Without loss of generality we can assume that
\begin{equation*}
|c_1|\ge |c_2|\ge\cdots\ge |c_N|.
\end{equation*}
We have
\begin{equation*}
\left\|\sum_{\nu=1}^N c_\nu g^\nu\right\|^2 = \<\sum_{\nu=1}^N
c_\nu g^\nu, \sum_{\nu=1}^N c_\nu g^\nu\>=\sum_{\nu=1}^N
\left(c_\nu^2 \<g^\nu,g^\nu\> + 2c_\nu\sum_{\eta = \nu+1}^n
c_\eta\<g^\nu,g^\eta\>\right).
\end{equation*}
Using (\ref{coher-NN}) and monotony of $|c_\nu|$ we estimate
\begin{multline*}
\left|c_\nu^2 \<g^\nu,g^\nu\> + 2c_\nu\sum_{\eta = \nu+1}^n
c_\eta\<g^\nu,g^\eta\> - c_\nu^2 \right|\le 2c_\nu\left|\sum_{\eta
=
\nu+1}^n c_\eta\<g^\nu,g^\eta\>\right|\le\\
\le 2c_\nu^2 \sum_{\eta = \nu+1}^n |\<g^\nu,g^\eta\>|\le
2c_\nu^2\NN(D).
\end{multline*}
Hence
\begin{equation*}
\left|\left\|\sum_{\nu=1}^N c_\nu g^\nu\right\|^2 - \sum_{\nu=1}^N
c_\nu^2\right| \le 2\NN(\D)\sum_{\nu=1}^N c_\nu^2.
\end{equation*}
\end{proof}

\begin{Lemm}\label{Lm-A1-1}
Suppose $\La\subset\N$ is a finite set of indexes and $\e>0$. If
for $f$ the representation
\begin{equation}\label{coh-A1-rep}
f=f_\e+ \sum_{\la\in\La}c_\la g^\la,\ c_\la\in\R,\ g^\la\in\D,\
\end{equation}
\begin{equation}\label{coh-A1-norm}
\sum_{\la\in\La}|c_\la|^p=|f|^p_p,
\end{equation}
\begin{equation}\label{coh-A1-eps}
\|f_\e\| <\e,
\end{equation}
holds, then for $\la_0\in\La$ we have
\begin{equation*}
\left|\<f,g^{\la_0}\>-c_{\la_0}\right|<\NN(\D)
\max_{\la\in\La}|c_\la|+\e,
\end{equation*}
and for $\la_0\not\in\La$ ---
\begin{equation*}
\left|\<f,g^{\la_0}\>\right|<\NN(\D) \max_{\la\in\La}|c_\la|+\e.
\end{equation*}
\end{Lemm}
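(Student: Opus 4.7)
}

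The plan is to simply take the inner product of the representation (\ref{coh-A1-rep}) with $g^{\lambda_0}$ and apply the triangle inequality, using $\|g^{\lambda_0}\|=1$, the Cauchy--Schwarz bound on the $f_\varepsilon$ term, and the definition (\ref{coher-NN}) of $\mu_1(\D)$ on the remaining cross terms.

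More precisely, I would expand
\begin{equation*}
\langle f,g^{\lambda_0}\rangle \;=\; \langle f_\varepsilon, g^{\lambda_0}\rangle \;+\; \sum_{\lambda\in\Lambda} c_\lambda \langle g^\lambda, g^{\lambda_0}\rangle.
\end{equation*}
In the case $\lambda_0\in\Lambda$ I isolate the diagonal contribution $c_{\lambda_0}\langle g^{\lambda_0},g^{\lambda_0}\rangle = c_{\lambda_0}$ and move it to the left, leaving
\begin{equation*}
\langle f,g^{\lambda_0}\rangle - c_{\lambda_0} \;=\; \langle f_\varepsilon, g^{\lambda_0}\rangle \;+\; \sum_{\lambda\in\Lambda,\ \lambda\ne \lambda_0} c_\lambda \langle g^\lambda, g^{\lambda_0}\rangle.
\end{equation*}
In the case $\lambda_0\notin\Lambda$ no diagonal term appears and the whole sum is over $\lambda\in\Lambda$, none of which equal $\lambda_0$.

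For both cases, Cauchy--Schwarz gives $|\langle f_\varepsilon, g^{\lambda_0}\rangle|\le \|f_\varepsilon\|<\varepsilon$ by (\ref{coh-A1-eps}). For the remaining sum I pull out $\max_{\lambda\in\Lambda}|c_\lambda|$ and use the fact that the indices in the sum are all distinct from $\lambda_0$, so by (\ref{coher-NN})
\begin{equation*}
\sum_{\lambda\ne \lambda_0}|\langle g^\lambda,g^{\lambda_0}\rangle| \;\le\; \mu_1(\D).
\end{equation*}
Combining the two bounds yields the stated inequalities.

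There is no substantial obstacle: the whole argument is a one-line expansion of the inner product, and the hypotheses (\ref{coh-A1-norm}) (the sharpness of the $\ell^p$ norm of the coefficients) plays no role here, only the representation and the coherence estimate do. The only point worth double-checking is that the sum $\sum_{\lambda\ne\lambda_0}|\langle g^\lambda,g^{\lambda_0}\rangle|$ is legitimately bounded by $\mu_1(\D)$ in both cases, which holds because $\Lambda$ is finite and, when $\lambda_0\in\Lambda$, we have explicitly excluded the index $\lambda_0$ from the summation.
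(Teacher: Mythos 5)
Your proposal is correct and follows essentially the same route as the paper: expand $\<f,g^{\la_0}\>$ using the representation, subtract the diagonal term when $\la_0\in\La$, bound the error term by Cauchy--Schwarz and (\ref{coh-A1-eps}), and bound the off-diagonal sum by $\NN(\D)\max_{\la\in\La}|c_\la|$ via (\ref{coher-NN}). Your observation that (\ref{coh-A1-norm}) is not actually used here also matches the paper's argument.
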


\begin{proof}
Using representation (\ref{coh-A1-rep}) we write for $\la_0\in\La$
\begin{equation*}
\<f,g^{\la_0}\>-c_{\la_0}= \<f_\e+ \sum_{\la\in\La}c_\la
g^\la,g^{\la_0}\>-c_{\la_0}\<g^{\la_0},
g^{\la_0}\>=\sum_{\la\in\La,\ \la\ne\la_0}\<c_\la
g^\la,g^{\la_0}\>+\<f_\e,g^{\la_0}\>
\end{equation*}
and for $\la_0\not\in\La$
\begin{equation*}
\<f,g^{\la_0}\>=\<f_\e+ \sum_{\la\in\La}c_\la g^\la,g^{\la_0}\>=
\sum_{\la\in\La,\ \la\ne\la_0}\<c_\la
g^\la,g^{\la_0}\>+\<f_\e,g^{\la_0}\>.
\end{equation*}
To complete the proof we estimate using (\ref{coh-A1-eps}) and
Cauchy - Bunyakovsky - Schwarz inequality
\begin{multline*}
\left| \sum_{\la\in\La,\ \la\ne\la_0}\<c_\la
g^\la,g^{\la_0}\>+\<f_\e,g^{\la_0}\>\right|\le
\max_{\la\in\La}|c_\la|\sum_{\la\in\La,\ \la\ne\la_0}|\<
g^\la,g^{\la_0}\>|+\|f_\e\|\|g^{\la_0}\|<\\
 <
\max_{\la\in\La}|c_\la| \sum_{\wt g\in\D,\ \wt g\ne g^{\la_0}}
|\<\wt g, g^{\la_0}\>|+\e\le \NN(\D) \max_{\la\in\La}|c_\la|+\e.
\end{multline*}
\end{proof}

\begin{Lemm}\label{Lm-A1-2}
Let $\D$ be a dictionary with $\NN(\D)<1/3$, $f\in\A^p(\D)$ и
$m\ge 1$. Assume that for $n=m-1$, finite $\La\subset\N$ and
$\e>0$ the following representation
\begin{equation}\label{coh-A1-rep1}
f_n=f-G_n^{PGA}(f,\D)=f_\e+ \sum_{\la\in\La}c_{\la,n} g^\la,\
c_{\la,n}\in\R,\ g^\la\in\D,\  \|f_\e\| <\e
\end{equation}
holds. If
\begin{equation}\label{coh-A1-eps-new}
\e<\frac{1}{6}(1-3\NN(\D))\max_{\la\in\La}|c_{\la,m-1}|,
\end{equation}
then for $n=m$ we get (\ref{coh-A1-rep1}) with the same $\La$,
$f_\e$ and
\begin{equation*}
\sum_{\la\in\La}|c_{\la,m}|^p\le \sum_{\la\in\La}|c_{\la,m-1}|^p
-2^{-p}(1-3\NN(\D))^p\max_{\la_\in\La}|c_{\la,m-1}|^p,
\end{equation*}
\begin{equation}\label{coh-A1-maxi}
\max_{\la\in\La}|c_{\la,m}|\le \max_{\la\in\La}|c_{\la,m-1}|.
\end{equation}
\end{Lemm}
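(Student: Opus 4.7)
The plan is to track one step of PGA using Lemma~\ref{Lm-A1-1}. Set $M := \max_{\la\in\La}|c_{\la,m-1}|$ and denote by $g^{\la^*}$ the element of $\D$ selected by PGA at step $m$. First I would apply Lemma~\ref{Lm-A1-1} to the representation (\ref{coh-A1-rep1}) of $f_{m-1}$ to conclude that $|\<f_{m-1},g^{\la_0}\>| < \NN(\D) M + \e$ for every $\la_0 \notin \La$, while for the index $\la \in \La$ attaining $|c_{\la,m-1}| = M$ the inner product satisfies $|\<f_{m-1},g^\la\>| > M - \NN(\D) M - \e$. The hypothesis $\e < \frac{1}{6}(1 - 3\NN(\D))M$ and the inequality $(1-3\NN(\D))/3 < 1 - 2\NN(\D)$ (valid since $\NN(\D) < 1/3$) together guarantee that the second quantity strictly exceeds the first, so the PGA maximiser $\la^*$ must lie in $\La$.

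Once $\la^* \in \La$ is established, the PGA update $f_m = f_{m-1} - \<f_{m-1},g^{\la^*}\>g^{\la^*}$ immediately produces the representation (\ref{coh-A1-rep1}) for $n = m$ with the same $\La$ and $f_\e$: the updated coefficients are $c_{\la^*,m} := c_{\la^*,m-1} - \<f_{m-1},g^{\la^*}\>$ and $c_{\la,m} := c_{\la,m-1}$ for $\la \ne \la^*$. Lemma~\ref{Lm-A1-1} gives directly $|c_{\la^*,m}| < \NN(\D) M + \e$, and combining the PGA lower bound $|\<f_{m-1},g^{\la^*}\>| > (1 - \NN(\D))M - \e$ with the same lemma yields $|c_{\la^*,m-1}| > (1 - 2\NN(\D))M - 2\e$. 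Subtracting and inserting the bound on $\e$, I would derive
\[
|c_{\la^*,m-1}| - |c_{\la^*,m}| > (1 - 3\NN(\D)) M - 3\e > \frac{1}{2}(1 - 3\NN(\D))M.
\]
Since $|c_{\la^*,m}| < \NN(\D) M + \e < M$ and every other coefficient is unchanged, (\ref{coh-A1-maxi}) is also verified.

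For the $\ell^p$-estimate, only the $\la^*$-coefficient has moved, so
\[
\sum_{\la\in\La}|c_{\la,m}|^p - \sum_{\la\in\La}|c_{\la,m-1}|^p = |c_{\la^*,m}|^p - |c_{\la^*,m-1}|^p,
\]
and the convexity inequality $a^p - b^p \ge (a-b)^p$ for $p \ge 1$ and $a \ge b \ge 0$, together with the gap above, delivers the claimed decrement of at least $2^{-p}(1 - 3\NN(\D))^p M^p$. The main technical subtlety is not any single step but the arithmetic calibration: the constant $1/6$ in (\ref{coh-A1-eps-new}) is exactly what is needed so that the two error contributions --- $2\e$ from the lower bound on $|c_{\la^*,m-1}|$ and $\e$ from the upper bound on $|c_{\la^*,m}|$ --- together consume at most half of the principal gap $(1 - 3\NN(\D))M$, leaving the other half to drive the $\ell^p$-decay.
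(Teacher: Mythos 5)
Your proposal is correct and follows essentially the same route as the paper's proof: Lemma~\ref{Lm-A1-1} localizes the greedy pick inside $\La$, the single updated coefficient $c_{\la^*,m}=c_{\la^*,m-1}-\<f_{m-1},g^{\la^*}\>$ is shown to drop by at least $\frac12(1-3\NN(\D))\max_{\la\in\La}|c_{\la,m-1}|$ in absolute value, and superadditivity of $t\mapsto t^p$ converts that gap into the claimed $\ell^p$-decrement. Your version is in fact slightly cleaner: by reading $|c_{\la^*,m}|<\NN(\D)M+\e$ directly off Lemma~\ref{Lm-A1-1} you bypass the paper's sign normalization of $c_{\la_0}$ and its two-sided estimates on $\<f,g^{\la_0}\>$, while the arithmetic role you assign to the constant $1/6$ matches the paper's computation exactly.
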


\begin{proof}
From the definition of PGA it follows, that for $m\ge 1$
\begin{equation*}
 f-G_m^{PGA}(f,\D)= f_m = f_{m-1}-G_1(f_{m-1}).
\end{equation*}
Therefore it's sufficient to prove the lemma for arbitrary
$f\in\A_p(\D)$ and $m=1$.

To reduce the notations we write $c_\la$ instead of $c_{\la,0}$,
$\la\in\La$. Taking into account (\ref{coh-A1-eps-new}) we have
\begin{equation}\label{coh-A1-eps1}
(1 - 2\NN(\D))\max_{\la\in\La}|c_\la| - 2\e>
(1-3\NN(\D))\max_{\la\in\La}|c_\la| - 3\e \ge \frac{1}{2}
(1-3\NN(\D))\max_{\la\in\La}|c_\la|> 0
\end{equation}
By Lemma~\ref{Lm-A1-1} we get
\begin{equation*}
\max_{\la\in\La}|\<f,g^\la\>|> \max_{\la\in\La} |c_\la| -
\NN(\D)\max_{\la\in\La} |c_\la|-\e= (1-\NN(\D))\max_{\la\in\La}
|c_\la| -\e,
\end{equation*}
for $\la\not\in\La$, using also (\ref{coh-A1-eps1}) we obtain
\begin{equation*}
|\<f,g^\la\>|< \NN(\D)\max_{\la\in\La} |c_\la|+\e<
(1-\NN(\D))\max_{\la\in\La} |c_\la| -\e.
\end{equation*}
Therefore there exists $\la_0\in\La$ such that
\begin{equation}\label{coh-A1-sup}
|\<f,g^{\la_0}\>|=\sup_{g\in\D}|\<f,g\>|>(1-\NN(\D))\max_{\la\in\La}
|c_\la| -\e.
\end{equation}
Using Lemma~\ref{Lm-A1-1}, we have
\begin{equation*}
|\<f,g^{\la_0}\>|<|c_{\la_0}|+\NN(\D)\max_{\la\in\La} |c_\la|+\e.
\end{equation*}
Combining last two inequalities, we obtain
\begin{equation*}
(1-\NN(\D))\max_{\la\in\La} |c_\la| -\e <
|c_{\la_0}|+\NN(\D)\max_{\la\in\La} |c_\la|+\e.
\end{equation*}
\begin{equation*}
|c_{\la_0}|>(1-2\NN(\D))\max_{\la\in\La} |c_\la|-2\e.
\end{equation*}
Without loss of generality we can assume that $c_{\la_0}\ge 0$,
that is
\begin{equation}\label{coh-cla0}
c_{\la_0}>(1-2\NN(\D))\max_{\la\in\La} |c_\la|-2\e.
\end{equation}
Applying Lemma~\ref{Lm-A1-1}, (\ref{coh-cla0}) and
(\ref{coh-A1-eps1}), we obtain
\begin{equation}\label{coh-A1-star1}
\<f,g^{\la_0}\> > c_{\la_0}-\NN(\D)\max_{\la\in\La} |c_\la|-\e
>(1-3\NN(\D))\max_{\la\in\La} |c_\la|-3\e\ge \frac{1}{2} (1-3\NN(\D))\max_{\la\in\La} |c_\la|,
\end{equation}
\begin{equation*}
\<f,g^{\la_0}\> < c_{\la_0}+\NN(\D)\max_{\la\in\La} |c_\la|+\e.
\end{equation*}
Hence by (\ref{coh-cla0}) and (\ref{coh-A1-eps1})
\begin{multline}\label{coh-A1-star2}
c_{\la_0} - \<f,g^{\la_0}\>\ge
c_{\la_0}-\left(c_{\la_0}+\NN(\D)\max_{\la\in\La}
|c_\la|+\e\right) \ge -\left(\NN(\D)\max_{\la\in\La}
|c_\la|+\e\right)\ge\\
\ge -c_{\la_0}+(1-3\NN(\D))\max_{\la\in\La} |c_\la|-3\e\ge
-c_{\la_0} + \frac{1}{2} (1-3\NN(\D))\max_{\la\in\La} |c_\la|.
\end{multline}
Combining (\ref{coh-A1-star1}) and (\ref{coh-A1-star2}), we
estimate
\begin{equation*}\label{coh-A1-accurate}
|c_{\la_0} - \<f,g^{\la_0}\>| + \frac{1}{2}
(1-3\NN(\D))\max_{\la\in\La} |c_\la|\le c_{\la_0},
\end{equation*}
\begin{equation}\label{coh-A1-accurate}
|c_{\la_0} - \<f,g^{\la_0}\>|^p\le c_{\la_0}^p - \left(\frac{1}{2}
(1-3\NN(\D))\max_{\la\in\La} |c_\la|\right)^{1/p}.
\end{equation}
If we set
\begin{equation*}
c_{\la,1}=c_\la=c_{\la,0},\ \la\in\La\setminus\{\la_0\},
\end{equation*}
\begin{equation*}
c_{\la_0,1}=c_{\la_0,0}-\<f,g^{\la_0}\>,
\end{equation*}
then statement of the lemma will folow from
(\ref{coh-A1-accurate}).
\end{proof}

\paragraph{Proof of Theorem~\ref{ThA1Good}.}
Lemma~\ref{Lm-A1-2} implies that for any $m\ge 0$
\begin{equation*}
|f_m|_1\le |f|_1.
\end{equation*}
 Using Lemma~3.5 from~\cite{DT}  and Lemma~\ref{Lm-A1-2} we have for $m\ge 0$
\begin{equation*}
|\<f_m,g_{m+1}\>|= \sup_{g\in\D}|\<f_m,g\>|\ge
\frac{\|f_m\|^2}{|f_m|_1}\ge \frac{\|f_m\|^2}{|f|_1}.
\end{equation*}
By definition of PGA
\begin{equation*}
\|f_{m+1}\|^2 =\|f_m\|^2 -\<f_m,g_{m+1}\>^2\le
\|f_m\|^2-\left(\frac{\|f_m\|^2}{|f|_1}\right)^2=\|f_m\|^2\left(1-\frac{\|f_m\|^2}{|f|^2_1}\right).
\end{equation*}
Applying Lemma~3.4 from~\cite{DT} for $a_m=\|f_{m-1}\|^2$ and
$A=|f|_1^2$ and taking into account the inequality
\begin{equation*}
a_1=\|f_0\|^2\le |f|_1^2,
\end{equation*}
we obtain that for $\{a_m\}_{m=1}^\infty$ such that
\begin{equation*}
a_{m+1}\le a_m\left(1-\frac{a_m}{|f|_1^2}\right),\  a_1\le A,
\end{equation*}
the following inequality
\begin{equation*}
a_{m}\le Am^{-1}
\end{equation*}
holds. Thus for $m\ge 1$ we have
\begin{equation*}
\|f_m\|=a_{m+1}^{1/2}\le |f|_1(m+1)^{-1/2}\le |f|_1 m^{-1/2}.
\end{equation*}
This completes the proof of the theorem.

\paragraph{Proof of Theorem~\ref{ThA1GoodMod}.}
Let $k\ge 1$ and $f\in\A_p$. For arbitrary $\e$
\begin{equation}\label{coh-A1-eps2}
0<\e<\frac{1}{6}(1-3\NN(\D))k^{-1/p}|f|_p,
\end{equation}
there exists representation (\ref{coh-A1-rep}) such that
inequalities (\ref{coh-A1-norm}) and (\ref{coh-A1-eps}) hold. We
claim that there exists $n$, $0\le n\le k$ such that
\begin{equation}\label{coh-A1-n1}
\max_{\la\in\La}|c_{\la,n}|\le c_1(p)c_2(\NN(\D)) k^{-1/p}|f|_p,
\end{equation}
\begin{equation}\label{coh-A1-n2}
\sum_{\la\in\La}|c_{\la,n}|^p\le\sum_{\la\in\La}|c_{\la,0}|^p=
|f|_p^p.
\end{equation}
 For every $m=1,\ldots,k$ for $n=m-1$ the representation
(\ref{coh-A1-rep1}) hold (beginning with $c_{\la,0}:=c_\la$) and
either
\begin{equation*}
\max_{\la\in\La}|c_{\la,m-1}|\le k^{-1/p}|f|_p,
\end{equation*}
in this case we can set $n=m-1$,
 or
\begin{equation*}
\max_{\la\in\La}|c_{\la,m-1}|\le k^{-1/p}|f|_p.
\end{equation*}
Then taking into account (\ref{coh-A1-eps2}) we have
(\ref{coh-A1-eps-new}). Therefore, by Lemma~\ref{Lm-A1-2} the
representation (\ref{coh-A1-rep1})  holds for $n=m$ and using
(\ref{coh-A1-norm}) and (\ref{coh-A1-maxi}) we have
\begin{multline*}
0\le\sum_{\la\in\La}|c_{\la,m}|^p\le
\sum_{\la\in\La}|c_{\la,m-1}|^p
-2^{-p}(1-3\NN(\D))^p\max_{\la\in\La}|c_{\la,m-1}|^p =\cdots=\\=
\sum_{\la\in\La}|c_{\la,0}|^p-\sum_{n=1}^m
2^{-p}(1-3\NN(\D))^p\max_{\la\in\La}|c_{\la,m-1}|^p\le
|f|_p^p-m2^{-p}(1-3\NN(\D))^p\max_{\la\in\La}|c_{\la,m}|^p.
\end{multline*}
\begin{equation*}
\max_{\la\in\La}|c_{\la,m}|\le 2(1-3\NN(\D))^{-1}
m^{-1/p}|f|_p,\quad \sum_{\la\in\La}|c_{\la,m}|^p\le|f|_p^p,
\end{equation*}
This provides (\ref{coh-A1-n1}) and (\ref{coh-A1-n2}) for $n=k$.

Using (\ref{coh-A1-n1}) and (\ref{coh-A1-n2}), we estimate
\begin{multline*}
\sum_{\la\in\La}|c_{\la,n}|^2 =
\sum_{\la\in\La}|c_{\la,n}|^p|c_{\la,n}|^{2-p}=\left(\max_{\la\in\La}|c_{\la,n}|\right)^{2-p}\sum_{\la\in\La}|c_{\la,n}|^p\le\\
\le \left(c_3(p)c_4(\NN(\D))
k^{-\frac{2-p}{p}}|f|^{2-p}_p\right)|f|_p^p=c_3(p)c_4(\NN(\D)
k^{-2/p + 1}|f|_p^2.
\end{multline*}
Applying Lemma~\ref{Lm-A1-0} and (\ref{coh-A1-rep1}), we obtain
\begin{multline*}
\|f_k\|\le\|f_n\| \le \|\sum_{\la\in\La}c_{\la,n} g^\la\|
+\|f_\e\|\le
\left((1+2\NN(\D))\sum_{\la\in\La} c_{\la,n}^2\right)^{1/2}+\e \le\\
\le C_1(p)C_2(\NN(\D))k^{-1/p+1/2}|f|_p +\e.
\end{multline*}
Since $\e>0$ can be arbitrary small the last inequality completes
the proof of Theorem~\ref{ThA1GoodMod}. $\square$

\end{document}